\begin{document}


\title{A lower bound for the complex flow number of a graph: a geometric approach.}{}

\newtheorem{theorem}{Theorem}
\newtheorem{proposition}[theorem]{Proposition}
\newtheorem{lemma}[theorem]{Lemma}
\newtheorem{definition}[theorem]{Definition}
\newtheorem{example}[theorem]{Example}
\newtheorem{corollary}[theorem]{Corollary}
\newtheorem{conjecture}[theorem]{Conjecture}
\newtheorem{remark}{Remark}
\newtheorem{problem}{Problem}
\newtheorem{claim}{Claim}

\author{Davide Mattiolo \footnote{Department of Computer Science, KU Leuven Kulak, 8500 Kortrijk, Belgium}, Giuseppe Mazzuoccolo \footnote{Dipartimento di Informatica, Universit\`{a} degli Studi di Verona, Italy},\\Jozef Rajn\'{i}k \footnote{Department of Computer Science, Comenius University in Bratislava, Slovakia}, Gloria Tabarelli \footnote{Dipartimento di Matematica, Universit\`{a} di Trento,
Italy}}

\vspace*{-2cm}
{\let\newpage\relax\maketitle}

\begin{abstract}

Let $r \geq 2$ be a real number.  A complex nowhere-zero $r$-flow on a graph $G$ is an orientation of $G$ together with an assignment $\varphi\colon E(G)\to  \mathbb{C}$ such that, for all $e \in E(G)$,
the modulus of the complex number $\varphi(e)$ lies in the interval $[1,r-1]$ and, for every vertex, the incoming flow is equal to the outgoing flow. The complex flow number of a bridgeless graph $G$, denoted by $\phi_{\mathbb{C}}(G)$, is the minimum of the real numbers $r$ such that $G$ admits a complex nowhere-zero $r$-flow.
The exact computation of $\phi_{\mathbb{C}}$ seems to be a hard task even for very small and symmetric graphs. 
In particular, the exact value of $\phi_{\mathbb{C}}$ is known only for families of graphs where a lower bound can be trivially proved.
Here, we use geometric and combinatorial arguments to  give a non trivial lower bound for $\phi_{\mathbb{C}}(G)$ in terms of the odd-girth of a cubic graph $G$ (i.e. the length of a shortest odd cycle) and we show that such lower bounds are tight. Our main result, Theorem \ref{cor:oddgirth}, relies on the exact computation of the complex flow number of the wheel graph $W_n$ (see Theorem \ref{thm:wheels}). In particular, we show that for every odd $n$, the value of $\phi_{\mathbb{C}}(W_n)$ arises from one of three suitable configurations of points in the complex plane according to the congruence of $n$ modulo $6$.\end{abstract}

\section{Introduction}\label{section intro}
The theory of integer nowhere-zero flows on finite graphs represents a very active research area in graph theory (see for example \cite{Exp_many_nzf}, \cite{Jaeger}, \cite{kochol_5-flow_girth_11}, \cite{Lov_thomassen_Zhang:3NZF}, \cite{MS_5_flow_oddness4}, \cite{Seymour6flow}, \cite{TUTTE_contr_chrom_pol}, \cite{ZhangBook}). The generalization to real numbers is also well-studied (see for instance \cite{EMT16}, \cite{tarsizhang}, \cite{GMM_cfn5}, \cite{LuSk}, \cite{MS_edge_col_cflows}), while very few is known in the complex case or, more in general, for flows taking values in $\mathbb{R}^d$ (see \cite{Tho}, \cite{Zhangandal}, \cite{DeVos}).
Let $r \geq 2$ be a real number.  A $d$-dimensional nowhere-zero $r$-flow on a graph $G$, an $(r, d)$-NZF on $G$ from now on, is an orientation of $G$ together with an assignment $\varphi\colon E(G)\to  \mathbb{R}^d$ such that, for all $e \in E(G)$,
the (Euclidean) norm of $\varphi(e)$ lies in the interval $[1,r-1]$ and, for every vertex, the sum of the inflow and outflow is the zero element in $\mathbb{R}^d$. The $d$-dimensional  flow number of a bridgeless graph $G$, denoted by $\phi_d(G)$, is defined as the infimum of the real numbers $r$ such that $G$ admits an $(r, d)$-NZF.

In this paper, we consider only the case $d=2$. For this reason, in order to simplify the notation, we refer to an $(r,2)$-NZF on a bridgeless graph $G$ as a \emph{complex nowhere-zero $r$-flow} on $G$, and to its $2$-dimensional flow number $\phi_2(G)$ as its \emph{complex flow number}, denoting it from now on by $\phi_\mathbb{C}(G)$.

It can be easily proved (see \cite{d_dim_flow}) that $\phi_\mathbb{C}(G)$ (and more in general $\phi_d(G)$) is actually a minimum. So, given a bridgeless graph $G$, it always admits a complex $\phi_\mathbb{C}(G)$-flow, not necessarily unique. We will refer to a complex $\phi_\mathbb{C}(G)$-flow as an \emph{optimal complex flow} of $G$.

A general upper bound for $\phi_\mathbb{C}(G)$, where $G$ is a bridgeless graph, has been proposed by the authors in \cite{d_dim_flow}. However, the exact value of $\phi_\mathbb{C}(G)$ is known only when $G$ belongs to very specific classes of graphs. For all these graphs, a lower bound for $\phi_\mathbb{C}(G)$ can be easily proved, since either it is the minimum admissible value (i.e. $\phi_\mathbb{C}(G)=2$) or since it easily arises from some specific local properties of the graph $G$.
Establishing good lower bounds for $\phi_\mathbb{C}(G)$ remains in general the hardest task in the study of such a parameter, even if we focus on the class of cubic graphs. Note that the restriction to the class of cubic graphs is standard in flow theory and it can be applied to complex flows as well.

In this paper, we use a combination of geometric and combinatorial arguments to prove a non-trivial lower bound for $\phi_\mathbb{C}(G)$, see Corollary \ref{cor:oddgirth}, in terms of the length of a shortest odd cycle of a bridgeless cubic graph $G$. Theorem \ref{cor:oddgirth} is a straightforward consequence of Theorem \ref{thm:wheels}, where the complex flow number is exactly determined for every wheel graph $W_n$ of order $n+1$. The proof of Theorem \ref{thm:wheels} shows that there exists an optimal complex flow of $W_n$ which can be described by one of three suitable sequences of points in the complex plane, according to the congruence of $n$ modulo $6$. 


\section{Complex flow number of $W_n$}


For every integer $n\geq 3$, let $W_n$ be the wheel graph with $n+1$ vertices and consider the orientation of its edges as in Figure \ref{fig:W5}. More precisely, the $n$ vertices of the external cycle of $W_n$ are labeled with $v_0,v_1,...,v_{n-1}$ and the central vertex with $u$. All edges $uv_j$ and $v_{j-1}v_j$ in the chosen orientation of $W_n$ are directed towards $v_j$ (here and in what follows indices are taken modulo $n$). 

Let $\varphi$ be a $(\lambda+1,2)$-NZF of $W_n$. Set
	$$	
	\varphi(uv_j)=z_j \in \mathbb{C}, j \in \{0,...,n-1\},$$
	$$
	\varphi(v_jv_{j+1})=p_j \in \mathbb{C}, j \in \{0,...,n-1\}.
	$$

Along the paper, we will refer to the elements $z_j$ and $p_j$ simply as  complex numbers, all operations performed on them will be assumed to be standard operations on complex numbers. In particular, since $\varphi$ is a $(\lambda+1,2)$-NZF of $W_n$, the modulo of each flow value is a real number which lies in the interval $[1,\lambda]$, i.e. $1\leq |p_j|, |z_j| \leq \lambda$ holds. 
Moreover, the relation \begin{equation}\label{zi} z_j=p_j-p_{j-1} \end{equation} holds for every $j=0,...,n-1$. 
Relation (\ref{zi}) suggests that the knowledge of all points $p_j$ is sufficient to reconstruct the flow. Then, a natural representation of the flow is a sequence of $n$ points of the complex plane. For this reason, we will refer to the elements $p_j$ as points. Moreover, we associate to such a sequence a list of vectors, denoted by $p_{j-1}p_j$ and directed from $p_{j-1}$ to $p_j$. Very often, with a slight abuse of terminology, we refer to $p_{j-1}p_{j}$ as the vector $z_j$.    

Hence, along the paper we always represent a complex flow of $W_n$ as a cyclic sequence (i.e. the first element of the sequence is considered to succeed the last one) of $n$ points $(p_0,\dots,p_{n-1})$ in the complex plane such that all points belong to the circular crown between circumferences centered in the origin and of radius $1$ and $\lambda$, denoted by $\mathcal{C}_I$ and $\mathcal{C}_E$ respectively, and with the additional property that the norm of all vectors $p_{j-1}p_j$ lies also in the interval $[1,\lambda]$ (see for instance Figure \ref{fig:W5}). 
	
    \begin{figure}
	\center
	\includegraphics[]{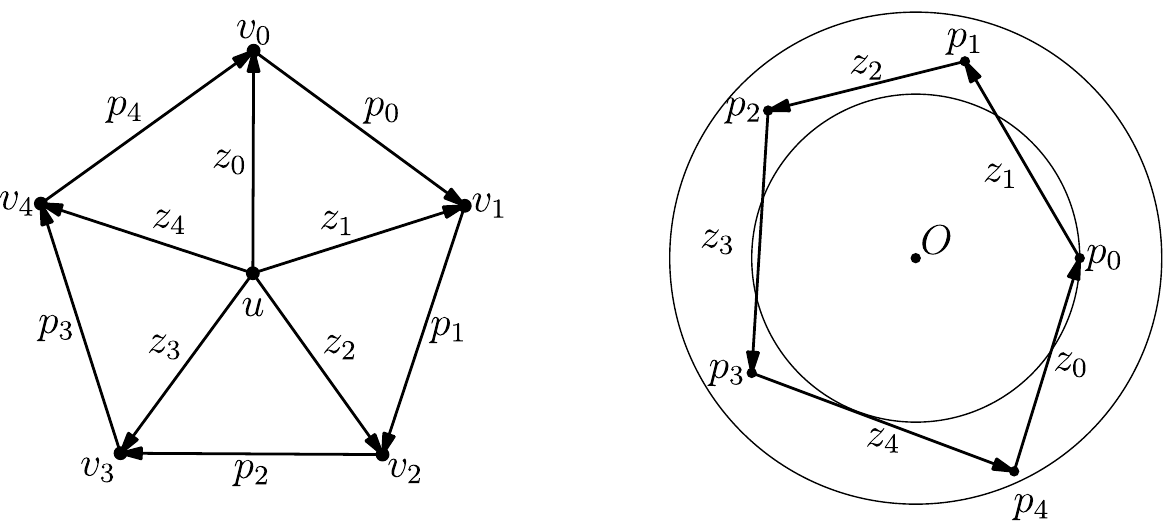}
	\caption{A representation of a complex flow of $W_5$.}
	\label{fig:W5}
	\end{figure}

By using such a representation, we exhibit a complex flow of $W_n$ for each odd $n$. Then, we prove its optimality in Theorem \ref{thm:wheels}.

Set $t=\lfloor \frac{n}{6} \rfloor$ for every odd $n$. We distinguish three cases according to the congruence of $n$ modulo $6$. We furnish a geometric description of each case and then we formally give the sequences of points representing the flows.

For $n \equiv 5 \pmod 6$, we consider points $p_j$ as the vertices of a regular star polygon $\{\frac{n}{t+1}\}$ (following the standard Schl\"afli notation, see \cite{CoxeterBook}) inscribed in $\mathcal{C}_I$. The length of each side of the polygon is equal to $2\sin(\frac{\pi}{6}\cdot \frac{n+1}{n})$. 
For $n \equiv 1 \pmod 6$, we construct points $p_j$ on $\mathcal{C}_I$ as follows: starting from $p_0$ and moving in clockwise direction, we have $p_1$ at distance $1$ from $p_0$. All others points are obtained by moving on $\mathcal{C}_I$ in anticlockwise direction, each point at distance $2\sin(\frac{\pi}{6}\cdot\frac{n}{n-1})$ from the previous one. The distance between $p_{n-1}$ and $p_0$ results to be also $2\sin(\frac{\pi}{6}\cdot \frac{n}{n-1})$.
For $n \equiv 3 \pmod 6$, we construct points $p_j$ on $\mathcal{C}_I$ except $p_1$ which belongs to $\mathcal{C}_E$. Starting from $p_0 \in \mathcal{C}_I$ and moving in clockwise direction, we have $p_1 \in \mathcal{C}_E$ at distance $1$ from $p_0$. Then, $p_2 \in \mathcal{C}_I$ is at distance $1$ from $p_1$, again in clockwise direction. All others points are obtained following $\mathcal{C}_I$ in anticlockwise direction, each at distance $2\sin(\frac{\pi}{6}\cdot \frac{n}{n-1})$ from the previous one. Once again also the distance between $p_{n-1}$ and $p_0$ is $2\sin(\frac{\pi}{6}\cdot\frac{n}{n-1})$. Figure \ref{fig:optimal_flows} represents an example of the described sequences for each possible odd congruence class modulo $6$.

Hence, the sequences of points result to be the following.

\begin{itemize}
 \item if $n \equiv 5 \pmod 6$, 
 $$p_j=e^{ij\left(\frac{\pi}{3}\cdot \frac{n+1}{n}\right)},  \forall j : 0\leq j \leq n-1,$$
\item if $n \equiv 1 \pmod 6$, 
$$p_0=e^{i\frac{\pi}{3}} \text{ and }   
 p_{j+1}=e^{ij\left(\frac{\pi}{3}\cdot\frac{n}{n-1}\right)}, \forall j: 0\leq j \leq n-2,$$
 \item if $n \equiv 3 \pmod 6$,  
 $$ p_0=e^{2i\left(\frac{\pi}{6}\cdot\frac{2n-3}{n-1}\right)}, p_1=2\sin\left(\frac{\pi}{6}\cdot\frac{n}{n-1}\right)e^{i\left(\frac{\pi}{6}\cdot\frac{2n-3}{n-1}\right)} \text{ and } $$ $$p_{j+2}=e^{ij\left(\frac{\pi}{3}\cdot\frac{n}{n-1}\right)}, \forall j: 0\leq j \leq n-3.$$
\end{itemize}

For each $n$, we denote by $\lambda$ the maximum distance between two consecutive points of the corresponding sequence. Note that in the case $n \equiv3 \pmod 6$, such a value is also equal to $|p_1|$. We have a $(\lambda+1,2)$-NZF of $W_n$ and $\lambda + 1$ gives an upper bound for $\phi_{\mathbb{C}}(W_n)$. 
\begin{remark}\label{remark:W3}
For each $n \geq 3$, $\phi_{\mathbb{C}}(W_n)\leq\phi_{\mathbb{C}}(W_3) \leq 1+\sqrt{2}$. 
\end{remark}
We will make use of this remark along the proof of Theorem \ref{thm:wheels} in order to guarantee a general upper bound for $\phi_{\mathbb{C}}(W_n)$ which will be sufficiently small for our aims. 


\begin{figure}
	\center
	\includegraphics[]{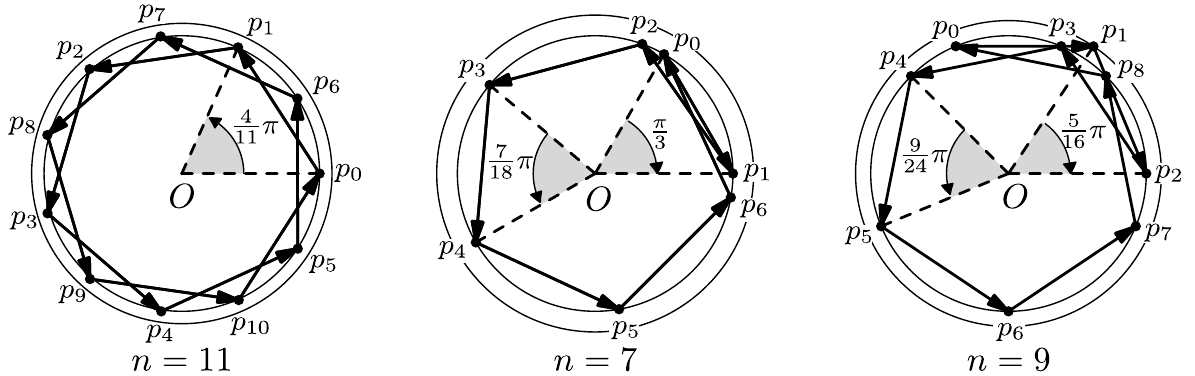}
	\caption{Three sequences corresponding to, from left to right, optimal flows of $W_{11}$, $W_7$ and $W_9$.}
	\label{fig:optimal_flows}
	\end{figure}

\begin{theorem} \label{thm:wheels}
	Let $W_n$ be the wheel graph of order $n+1$, for $n \geq 3$. Then, $$\phi_{\mathbb{C}}(W_n) = \begin{cases}
	2& \text{ if } n \text{ is even}, \\
	1 + 2\sin(\frac{\pi}{6}\cdot\frac{n}{n-1})& \text{ if } n \equiv 1,3\mod 6, \\
	1 + 2\sin(\frac{\pi}{6}\cdot\frac{n+1}{n})& \text{ if } n\equiv 5 \mod 6. \\                                                                                                                               \end{cases}
	$$
\end{theorem}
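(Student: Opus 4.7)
The plan is to split by the parity of $n$. For $n$ even the bound $\phi_{\mathbb{C}}(W_n)\ge 2$ is automatic from the definition, and $\phi_{\mathbb{C}}(W_n)\le 2$ is obtained by letting the points $p_j$ alternate between two fixed points of $\mathcal{C}_I$ one unit apart; the resulting zigzag closes up precisely because $n$ is even, and all $|p_j|=|z_j|=1$. For $n$ odd the upper bound is already furnished by the constructions described just before the theorem, so the work reduces to proving the matching lower bounds.

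For the odd case I keep the polygon viewpoint. A complex $(\lambda+1)$-flow is encoded by a closed cyclic sequence $(p_0,\dots,p_{n-1})$ in the annulus $\mathcal{A}_\lambda=\{z\in\mathbb{C} : 1\le|z|\le\lambda\}$ with all side lengths $|z_j|=|p_j-p_{j-1}|$ in $[1,\lambda]$. Setting $r_j=|p_j|$ and $\alpha_j=\arg(p_j/p_{j-1})\in(-\pi,\pi]$, the law of cosines in the triangle with vertices $0$, $p_{j-1}$, $p_j$ turns $|z_j|\in[1,\lambda]$ into a two-sided bound on $\cos\alpha_j$ as a function of $(r_{j-1},r_j)$; closing the polygon yields $\sum_j\alpha_j=2\pi k$ for some integer winding $k$. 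A perturbation argument at an optimal flow lets me assume each $r_j\in\{1,\lambda\}$ (empirically true in all three constructions), so every edge is of type $(I,I)$, $(E,E)$, or mixed, with $|\alpha_j|$ lying in $[\pi/3,\,2\arcsin(\lambda/2)]$, $[2\arcsin(\frac{1}{2\lambda}),\,\pi/3]$, or $[\arccos(\frac{\lambda}{2}),\,\arccos(\frac{1}{2\lambda})]$ respectively.

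The heart of the argument is the resulting finite optimisation. Writing $F$ and $B$ for the sums of the positive and the absolute values of the negative $\alpha_j$, the identity $F-B=2\pi k$ together with the feasibility intervals above yields a linear lower bound on $\theta_\lambda:=2\arcsin(\lambda/2)$, and I minimise over all choices of signature $s\in\{I,E\}^n$, sign split, and integer $k$. In the pure-$I$ case the optimum is identified case by case: for $n\equiv 5\pmod 6$ it is the all-positive configuration with $k=(n+1)/6$, giving $\lambda\ge 2\sin(\pi(n+1)/(6n))$; for $n\equiv 1\pmod 6$ it is the configuration with one backward step ($n_+=n-1$, $n_-=1$, $k=(n-1)/6$), giving $\lambda\ge 2\sin(\pi n/(6(n-1)))$. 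For $n\equiv 3\pmod 6$ the pure-$I$ optimum is strictly above the claimed value, and allowing a single $\mathcal{C}_E$ vertex produces two mixed edges whose smaller lower-bound angle $\arccos(\lambda/2)$ lets one balance the sum at $k=(n-3)/6$ and match the tight bound $\lambda = 2\sin(\pi n/(6(n-1)))$.

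The hard part will be to rule out, in each residue class, every signature with more $\mathcal{C}_E$ points than in the canonical construction. Pushing a vertex to $\mathcal{C}_E$ narrows the admissible maximum of $|\alpha_j|$ at the origin ($\pi/3$ for $(E,E)$ edges and $\arccos(\frac{1}{2\lambda})<\theta_\lambda$ for mixed ones), while the smaller lower bound $\arccos(\lambda/2)$ only helps when the angular sum needs fine tuning. I expect to formalise this as an exchange/monotonicity argument: replacing any $\mathcal{C}_E$-run in a non-canonical signature by an $\mathcal{C}_I$-run never increases the required $\theta_\lambda$. Carrying out this exchange uniformly over all signatures and all admissible integer $k$ is the most delicate combinatorial step, and is where I expect the bulk of the proof's effort to lie.
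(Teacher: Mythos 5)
Your setup is essentially the paper's: you encode a flow as a closed cyclic sequence of points in the annulus, reduce to boundary radii, classify each edge by the pair of circles its endpoints lie on, convert the chord-length constraints into intervals for the central angles via the law of cosines, and use the winding identity $\sum_j\alpha_j=2\pi k$ to bound $\lambda$. Your three candidate optima (all-positive on $\mathcal{C}_I$ with $k=(n+1)/6$ for $n\equiv 5$, one backward unit step with $k=(n-1)/6$ for $n\equiv 1$, one $\mathcal{C}_E$ vertex with two mixed edges and $k=(n-3)/6$ for $n\equiv 3$) are exactly the paper's configurations, and the arithmetic you indicate for them checks out. The even case and the upper bounds are fine.

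However, the proposal has a genuine gap, and it is precisely where you say you ``expect the bulk of the proof's effort to lie.'' Two steps are asserted but not carried out. First, the reduction to $r_j\in\{1,\lambda\}$ is not a routine perturbation: in the paper it is the content of Claim \ref{claim:old_123}, it requires splitting into the cases $\max_j|z_j|<\lambda$ versus $\max_j|z_j|=\lambda$, choosing $\varphi$ extremal for a lexicographic count (fewest vectors of norm $\lambda$, then fewest points on $\mathcal{C}_E$), and applying different local moves ($\rho_{h,k}$ versus $\sigma_{h,k}$) depending on the orientations of the two vectors meeting at an interior point; it also yields the stronger conclusion $|z_j|\in\{1,\lambda\}$ with the modulus determined by the sign of $\alpha_j$, which substantially shrinks your search space. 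Second, and more importantly, the elimination of every non-canonical signature, sign pattern and winding number --- your proposed ``exchange/monotonicity argument'' --- is exactly Claims \ref{claim:positive_vectors}--\ref{claim:no_3_subsequences} of the paper (no $IE^+$, $EE^+$, $EI^+$ or $EE^-$ edges, and the forbidden subsequences $IE^-EI^-IE^-$, $IE^-EI^-II^-$, $II^-II^-$), each of which needs its own geometric estimate (e.g.\ the inequality $\alpha+\beta>\pi/3$ and the standing bound $\lambda\le\sqrt2$ from Remark \ref{remark:W3}). Without this step your argument only verifies that the canonical configurations give the stated value of $\lambda$; it does not prove that no other feasible configuration does better, so the lower bound is not established. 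The plan is sound and aligned with the paper, but as written it is a proof outline, not a proof.
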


\begin{proof}
	If $n$ is even then $W_n$ has a $(3,1)$-NZF. Therefore, by Proposition 1 in \cite{Tho}, $\phi_{\mathbb{C}}(W_n)=2.$
	
	Let $n$ be odd and let $\varphi$ be an optimal $(\lambda+1,2)$-NZF of $W_n$. From now on we can assume $\lambda \leq \sqrt{2}$ due to Remark \ref{remark:W3}. Let $(p_0,\dots,p_{n-1})$ be the associated cyclic sequence of points $p_j$. As already remarked, we denote the vector $p_{j-1}p_{j}$ by $z_j$ for each $j \in \{0,\dots,n-1\}$, where all indices are taken modulo $n$.  In particular, we have $\max_j\{|z_j|, |p_j|\} =\lambda$ since $\varphi$ is optimal. 
	If $\theta \in (-\pi,\pi)$ denotes the amplitude of an angle and $\theta >0$ ($\theta < 0$), then the positive (negative) rotation is by definition in anticlockwise (clockwise) direction. Similarly, if $p_{j-1}=|p_{j-1}|e^{i\alpha_{j-1}}$ and $p_j=|p_j|e^{i\alpha_j}$ are two consecutive points in the cyclic sequence then the vector $z_j=p_{j-1}p_j$ is said to be positively (negatively) oriented, or simply positive (negative), if $\alpha_j-\alpha_{j-1}$ is positive (negative). Note that $\alpha_{j-1} \neq \alpha_{j}$ since $\lambda \leq \sqrt{2}$ $(< 2)$.

 First of all, let us define some geometric transformations of the cyclic sequence $(p_0,\dots,p_{n-1})$ that will be largely used in what follows.
 Let $\theta \in \mathbb{R}$ and $h,k \in \{0,\dots,n-1\}$. Define $\rho_{h,k}(\theta)$ as the transformation which rotates all points $p_h, p_{h+1},\dots, p_k$ around the origin by an angle of $\theta$ and fixes all the others. 
	Note that we are considering a cyclic sequence, hence $k$ could be less than $h$ and $\rho_{h,k}(\theta) \neq \rho_{k,h}(\theta)$. Indeed, if $h<k$, we have 
	
	\begin{multline*}\rho_{h,k}(\theta)(p_0,\dots,p_h,\dots,p_k,\dots,p_{n-1}) =\\
	=(p_0,\dots,p_{h-1},p_he^{i\theta},\dots,p_ke^{i\theta},p_{k+1},\dots,p_{n-1}),
	\end{multline*}
	and 
	\begin{multline*}\rho_{k,h}(\theta)(p_0,\dots,p_h,\dots,p_k,\dots,p_{n-1}) =\\
	= (p_0e^{i\theta},\dots,p_he^{i\theta},p_{h+1},\dots,p_{k-1},p_ke^{i\theta},\dots,p_{n-1}e^{i\theta}).\end{multline*}
	
	\begin{claim}
	\label{remark:transformations}
	For any $h \ne k \in \{0, 1, \dots, n - 1\}$ there is an angle $\theta$ such that the sequence $\rho_{h,k-1}(\theta)(p_0, \dots, p_{n-1})=(p'_0, \dots, p'_{n-1})$ satisfies:
	\begin{itemize}
	\item[(a)] $|p_j|=|p'_j|$ for all $j$, $|z_j|=|z'_j|$ for all $j\notin \{h,k\}$;
	\item[(b)] $\lambda>|z'_h|>|z_h|$ and $1<|z'_k|<|z_k|$, if $z_h$ and $z_k$ have the same orientation. In this case, we will say that the transformations lengthens $z_h$ and shortens $z_k$ (by an arbitrary small factor).
	\item[(c)] either $\lambda>|z'_h|>|z_h|$ and $\lambda>|z'_k|>|z_k|$, or $1<|z'_h|<|z_h|$ and $1<|z'_k|<|z_k|$, if $z_h$ and $z_k$ have opposite orientations. In the former (latter) case we say that the trasformation lengthens (shorthens) both of them (by an arbitrary small factor).
	\end{itemize}
	\end{claim}
	
	\begin{proof}
	Assertion (a) directly follows by the definition of $\rho_{h,k-1}(\theta)$.
	For Case (b) we choose $\theta$ positive or negative according to the common orientation of $z_h$ and $z_k$. In Case (c), say that $z_h$ is positive and $z_k$ is negative, we choose $\theta$ positive (negative) if we want to legthen (shorten) the vectors $z_h$ and $z_k$. In all the cases the absolute value of $\theta$ can be chosen arbitrary small to ensure arbitrary small scale factor and then $1<|z'_j|<\lambda$ for every $j$.
	\end{proof}
	
	For our aims, we also need to define $\sigma_{h,k}(\theta)$ as the transformation which rotates the point $p_h$ around the point $p_k$ by an angle $\theta$ and fixes any other point of the sequence.
	$$\sigma_{h,k}(\theta)(p_0,\dots,p_h,\dots,p_{n-1}) = (p_0,\dots,(p_h-p_k)e^{i\theta}+p_k,\dots,p_{n-1}).$$

    The main idea of the proof is choosing time by time an optimal flow $\varphi$ of $W_n$ satisfying additional minimality assumptions (explained later in details). 
    We will show that if such a $\varphi$ does not correspond to one of the three sequences (up to isometries) in Figure \ref{fig:optimal_flows}, then we can modify it to obtain a new sequence which contradicts the minimality assumptions on $\varphi$.	
	
	\begin{itemize}

	\item \textbf{Assume $|z_j|<\lambda$ for every $j \in \{0,1,...,n-1\}$};
	
	Consider the optimal 2-dimensional flows of $W_n$ having the minimum number, say $m_1$, of values $z_j$ with $|z_j|=1$. Among them, choose $\varphi$ with the minimum number, say $m_2$, of points $p_j$ with $|p_j|=\lambda$ (i.e. $p_j \in \mathcal{C}_E$). 
	Moreover, without loss of generality, we can assume that $\varphi$ has at least one of the vectors $z_j$ which is positive, otherwise we can simply consider $-\varphi$.	
	
	First of all, we prove that by our choice of $\varphi$ the relation $|z_j|=1$ follows for every index $j$ and that all vectors $z_j$ are positive.
	
	Suppose by contradiction that there exists an index $h$ such that $|z_h|>1$. By assumption $|z_h|<\lambda$. 
	If $m_1>0$, then there exists $k$ such that $|z_k|=1$.
	According to Claim \ref{remark:transformations}, we can lenghten $z_k$ and shorten or lengthen $z_h$ (according to its orientation) constructing a sequence of points having less than $m_1$ vectors of modulo $1$, a contradiction. 
	Then, we can assume $m_1=0$.
	Note that since $\varphi$ is optimal, there exists $k\in \{0,1,...,n-1\}$ such that $p_k \in \mathcal{C}_E$. 
	Construct a new sequence by multiplying $p_k$ by a factor $1-\varepsilon$, and leaving invariant all other points. If $\varepsilon>0$ is sufficiently small, the new sequence has no vectors of modulo $1$ like the original sequence, but less than $m_2$ points belonging to $\mathcal{C}_E$, a contradiction again.
	Up to now we have that $|z_j|=1$, for every index $j$.

	Assume there exist two indices $h,k$ such that $z_h$ is positive and $z_k$ is negative.
	Applying Claim \ref{remark:transformations} we can lengthen both of them to obtain less than $m_1$ vectors having modulo $1$, a contradiction with the choice of $\varphi$.
	
	Then, all vectors $z_j$ are positive and with $|z_j|=1$. Now we show that for each odd $n$ a sequence of points $p_j$ with such properties corresponds to a $(\lambda+1,2)$-flow having $\lambda$ larger than the value in the statement. This leads to a contradiction since $\varphi$ is chosen to be optimal.
	Indeed, let $\alpha_j>0$ be the angle subtended by the vector $z_j=p_{j-1}p_j$. It holds that $\sum_{j=0}^{n-1} \alpha_j=2a\pi$ for some positive integer $a$. Moreover, since $\lambda\leq \sqrt{2}<\Phi$ (Golden Ratio) the angle $\alpha_j$ is at least $2\arcsin(\frac{1}{2\lambda})$ which is the angle obtained with $p_{j-1},p_j \in \mathcal{C}_E$. Hence, we have $$\arcsin\left(\frac{1}{2\lambda}\right)\leq \frac{a}{n}\pi.$$	
	
	We look for the mimimum possible $\lambda$ which realizes previous inequality. It is clearly obtained when the equality holds. Moreover, since $\lambda>1$, $\arcsin(\frac{1}{2\lambda}) <\frac{\pi}{6}$ holds, that is $a <\frac{n}{6}$. So, $\lambda$ is minimum and larger than $1$ for $a=\lfloor\frac{n}{6}\rfloor$. Hence, if $n=6t+h$, we have that $\arcsin(\frac{1}{2\lambda})=\frac{t}{n}\pi$ and so $\lambda=\frac{1}{2\sin(\frac{t}{n}\pi)}$. Direct computations show that, for every odd $n$, this value of $\lambda$ is greater than the corresponding value in the statement of the theorem.
\end{itemize}

	\begin{itemize}
	
	\item\textbf{Assume $\exists k \in \{0,...,n-1\}$ such that $|z_k|=\lambda$};
	
	Without loss of generality we can assume $z_k$ positive.
	Consider the set of optimal complex flows of $W_n$ having the minimum number, say $m_1>0$, of vectors $z_j$ with $|z_j|=\lambda$. Among all such optimal flows, we choose $\varphi$ in such a way that it has the minimum number, say $m_2$, of points $p_j$ with $|p_j|=\lambda$, that is with the minimum number of points which belong to $\mathcal{C}_E$. 
 
\begin{claim}
\label{claim:old_123}
$|z_j|,|p_j| \in \{1,\lambda\}$ for every $j \in \{0,\dots,n-1\}$. In particular, $|z_j|=\lambda$ if and only if $z_j$ is positive (and then  $|z_j|=1$ if and only if $z_j$ is negative).
\end{claim}
\begin{proof}
First we prove that if $z_j$ is positive then $|z_j|=\lambda$. By contradiction suppose there exists $h \in \{0,\dots,n-1\}$ such that $|z_h|<\lambda$ and $z_h$ is positive.
By Claim \ref{remark:transformations} we can shorten $z_k$ and lengthen $z_h$ yielding an optimal flow having less than $m_1$ vectors with modulo $\lambda$, a contradiction.
%

In a similar way we prove that if $z_j$ is negative then $|z_j|=1$. By contradiction assume there exists $h \in \{0,\dots,n-1\}$ such that $|z_h|>1$ and $z_h$ is negative. Following Claim \ref{remark:transformations} we shorten $z_k$ and $z_h$, obtaining again a contradiction as in the previous case on the choice of $\varphi$.


Hence, for every $z_j$ we have $|z_j|=\lambda$ if $z_j$ is positive, while $|z_j|=1$ if $z_j$ is negative.

We complete the proof of the claim by showing that there is no index $h$ such that $1<|p_h|<\lambda$. If this is the case, then we will construct a new sequence of points $p_j'$ by applying a suitable transformation of the original sequence which leads to a contradiction. If $z_h$ and $z_{h+1}$ are both positive, we set $p_h'=(1-\varepsilon)p_h$ and $p_j'=p_j$ for all $j \neq h$, where $\varepsilon>0$ is chosen sufficiently small in such a way that $1<|p_{h-1}'p_h'|<\lambda$ and $1<|p_{h}'p_{h+1}'|<\lambda$. The new sequence corresponds to an optimal flow with $m_1-2$ vectors  with modulo $\lambda$, a contradiction.
If $z_h$ and $z_{h+1}$ are both negative, we set $p_h'=(1+\varepsilon)p_h$ and $p_j'=p_j$ for all $j \neq h$, where $\varepsilon>0$ is chosen sufficiently small in such a way that $1<|p_{h-1}'p_h'|<\lambda$ and $1<|p_{h}'p_{h+1}'|<\lambda$.
Then we shorten $z_k$ and $z_h' = p_{h-1}'p_h'$ as in Claim \ref{remark:transformations} and we obtain an optimal flow with $m_1-1$ vectors with modulo $\lambda$, a contradiction.
If $z_h$ is positive and $z_{h+1}$ is negative, then we trasform the original sequence by using $\sigma_{h,h+1}(\theta)$, where $\theta$ is sufficiently small and it is positive (resp.\ negative) if the angle $p_{h-1}\hat{p_h}p_{h+1}$ is non-negative (resp.\ negative).
Vice versa, if $z_h$ is negative and $z_{h+1}$ is positive, then we transform the original sequence by using $\sigma_{h,h-1}(\theta)$, where $\theta$ is sufficiently small and it is negative (resp.\ positive) if the angle $p_{h-1}\hat{p_h}p_{h+1}$ is non-negative (resp.\ negative).
In all cases, the resulting sequence of points defines a complex nowhere-zero flow on $W_n$ with less than $m_1$ vectors having modulo $\lambda$, a contradiction. This completes the proof of Claim \ref{claim:old_123}.
 \end{proof}
	
By Claim \ref{claim:old_123} we have only eight different types of vectors $z_j$ in $\varphi$. Indeed, $z_j=p_{j-1}p_j$ is completely defined up to rotations once we have its direction (and then its modulo by Claim \ref{claim:old_123}) and the modulo of $p_{j-1}$ and $p_j$ is in $\{1,\lambda\}$. Then, a vector $z_j$ can be denoted by $XY^*$(see Figure \ref{fig:8_types}), where $X,Y \in \{I,E\}$ and $* \in \{+,-\}$ are chosen in the following way.
\begin{itemize}
\item[$\bullet$] $X=I$ if $|p_{j-1}|=1$ and $X=E$ if $|p_{j-1}|=\lambda$;
\item[$\bullet$] $Y=I$ if $|p_j|=1$ and $Y=E$ if $|p_j|=\lambda$;
\item[$\bullet$] $*=+$ or $*=-$ if $z_j$ is positive or negative, respectively.
\end{itemize}	
	
\begin{figure}[h]
\center
\includegraphics[]{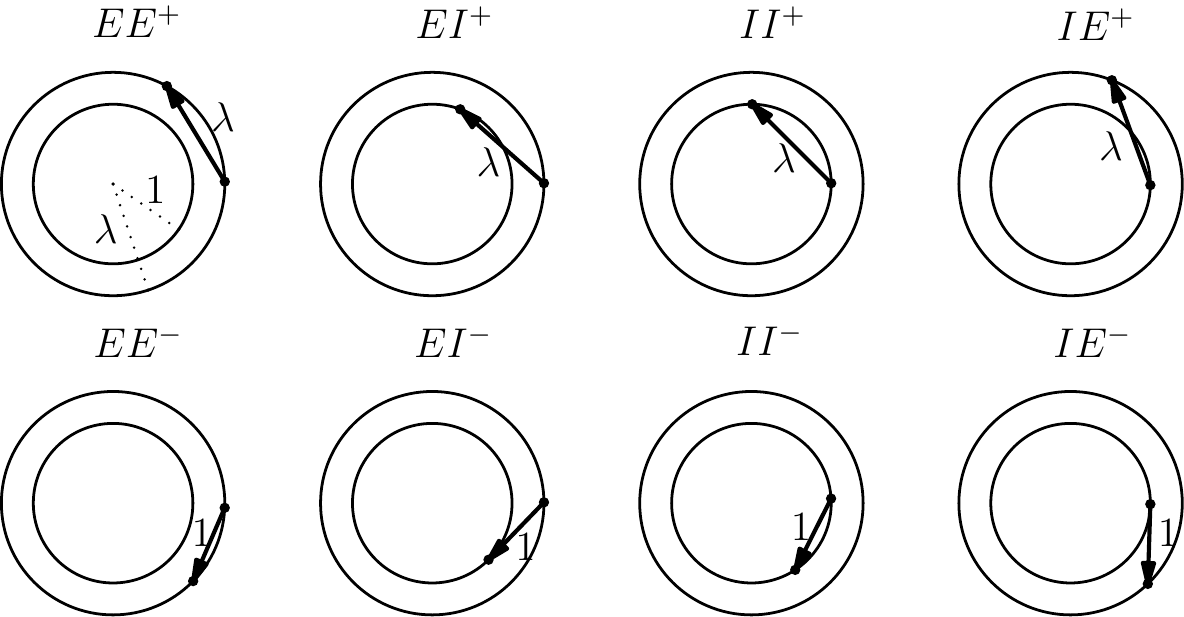}
\caption{The eight types of vectors $z_j$ in a representation of the chosen optimal flow $\varphi$ of $W_n$.}
\label{fig:8_types}
\end{figure}	
	
Without loss of generality we can assume that $|p_0|=1$. Indeed, if $|p_j|=\lambda$ for every index $j$, then all vectors $z_j$ are of type either $EE^+$ or $EE^-$. 
Moreover, $z_k$ is of type $EE^+$ and at least one of them is of type $EE^-$. Otherwise, $|p_j|=\lambda$ and $|z_j|=\lambda$ for all $j$, that is impossible in an optimal flow.

In particular, there must be an index $h$ with $z_h$ of type $EE^+$ and $z_{h+1}$ of type $EE^-$. The sequence is cyclic so we surely find the sequence $EE^+, EE^-$. Hence we can construct a sequence of points $p_j'$ which defines a flow with less than $m_1$ vectors of maximum length $\lambda$ by applying the transformation $\sigma_{h, h+1}(\theta)$, for a sufficiently small $\theta >0$. 


\begin{claim}
All positive vectors $z_j$ are of type $II^+$.
\label{claim:positive_vectors}
\end{claim}
\begin{proof}
First we prove that $\varphi$ has no vector $z_j$ of type $IE^+$. By contradiction, assume $z_h$ of type $IE^+$. There are four possibilities for the vector $z_{h+1}$, namely $EE^+$, $EE^-$, $EI^+$ and $EI^-$. Since we have $\lambda \leq \sqrt{2}<\Phi$ for every odd $n$, the mutual position of the two vectors $z_h$ and $z_{h+1}$ in each case is like the ones represented in Figure \ref{fig:no_IEplus}.
\begin{figure}[h]
\center
\includegraphics[]{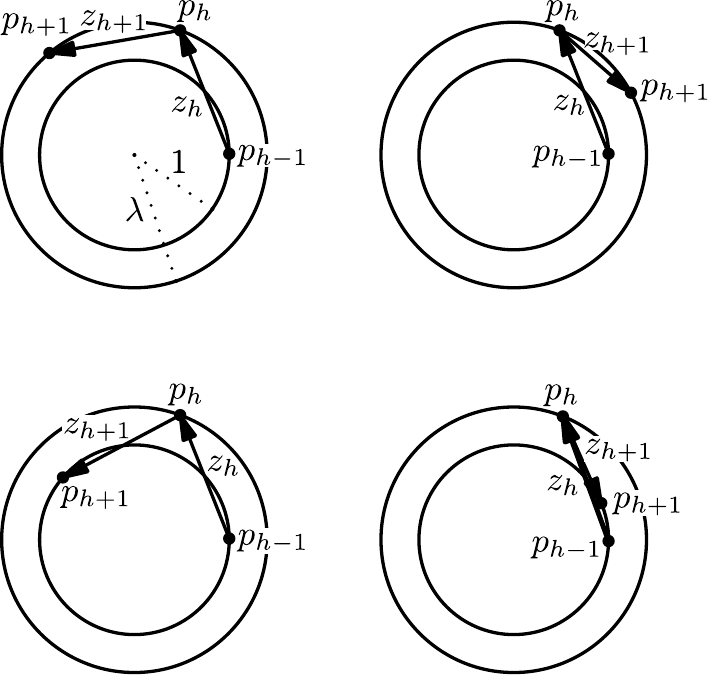}
\caption{Mutual position of $z_h$ and $z_{h+1}$.}
\label{fig:no_IEplus}
\end{figure}
In all these cases, by applying $\sigma_{h,h-1}(\theta)$ for a sufficiently small $\theta>0$ we obtain a new sequence of points $p_j'$ which corresponds to an optimal flow with either less than $m_1$ vectors of norm $\lambda$ (if $z_{h+1}$ is positive) or  $m_1$ vectors of norm $\lambda$ but less than $m_2$ points on $\mathcal{C}_E$ (if $z_{h+1}$ is negative), a contradiction in both cases.

Moreover, $\varphi$ has no vector of type $EE^+$. Indeed, note that the angle subtended at the centre by a vector of type $EE^+$ on $\mathcal{C}_E$ is equal to $\frac{\pi}{3}$. Then, it is the same angle subtended at the centre  by a vector of type $II^+$ on $\mathcal{C}_I$. 
If $p_{h-1}p_h$ is of type $EE^+$ (note that $h\geq2$ since $p_0 \in \mathcal{C}_I$), then we can construct a new sequence of points $(p_0',...,p_{n-1}')$ in the following way:

\begin{itemize}
\item[-] $p_0'=p_0$
\item[-] $|p_0'p_1'|=1$ and $p_1' \in \mathcal{C}_I$
\item[-] $p_{j-1}'p_{j}'$ is of the same type of $p_{j-2}p_{j-1}$ for $2\leq j\leq h$
\item[-] $p_{j-1}'p_{j}'$ is of the same type of $p_{j-1}p_{j}$ for $h <j\leq n-1$.
\end{itemize}
Since we replaced a vector $p_{h-1}p_{h}$ of type $EE^+$ with a vector $p_0'p_1'$ of type $II^+$ which subtends the same angle, while mantaining all the other vectors of the same type, the new sequence of points has less than $m_1$ vectors having norm $\lambda$, a contradiction.

Finally, we prove that $\varphi$ has no vectors of type $EI^+$. Indeed, if $p_{h-1}p_h$ is of type $EI^+$ with $h>0$, then $p_{h-2}p_{h-1}$ cannot be positive because both vectors of type $EE^+$ and $IE^+$ are already excluded. Then, it could be either of type $IE^-$ or $EE^-$. Since $\lambda \leq \sqrt{2}$ the mutual position of the points $p_{h-2},p_{h-1}$ and $p_h$ is like the ones in Figure \ref{fig:no_EIplus}.
\begin{figure}[h]
\center
\includegraphics[scale=1.3]{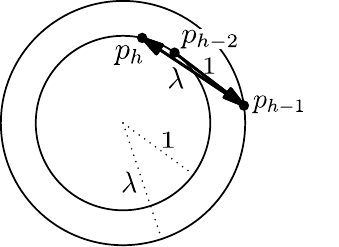}
\qquad
\includegraphics[scale=1.3]{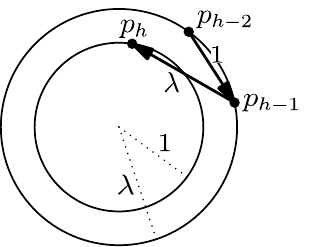}
\caption{Mutual position of the points $p_{h-2},p_{h-1}$ and $p_h$.}
\label{fig:no_EIplus}
\end{figure}

In both these cases, by applying $\sigma_{h-1,h-2}(\theta)$ for a sufficiently small $\theta <0$, we obtain a new configuration of points $p_j'$ which corresponds to an optimal flow with less than $m_1$ vectors of norm $\lambda$, a contradiction. This completes the proof of Claim 2.
\end{proof}

In what follows we will make use of the measure of some angles depicted in Figure \ref{fig:useful_angles}. We denote by $2\alpha$ and $2\beta$ the angles subtended  at the centre by a chord of length $1$ on $\mathcal{C}_E$ and of length $\lambda$ on $\mathcal{C}_I$, respectively. The following relations hold.
$$\alpha=\arcsin\left(\frac{1}{2\lambda}\right), \beta=\arcsin\left(\frac{\lambda}{2}\right).$$

\begin{figure}[h]
\center
\includegraphics[]{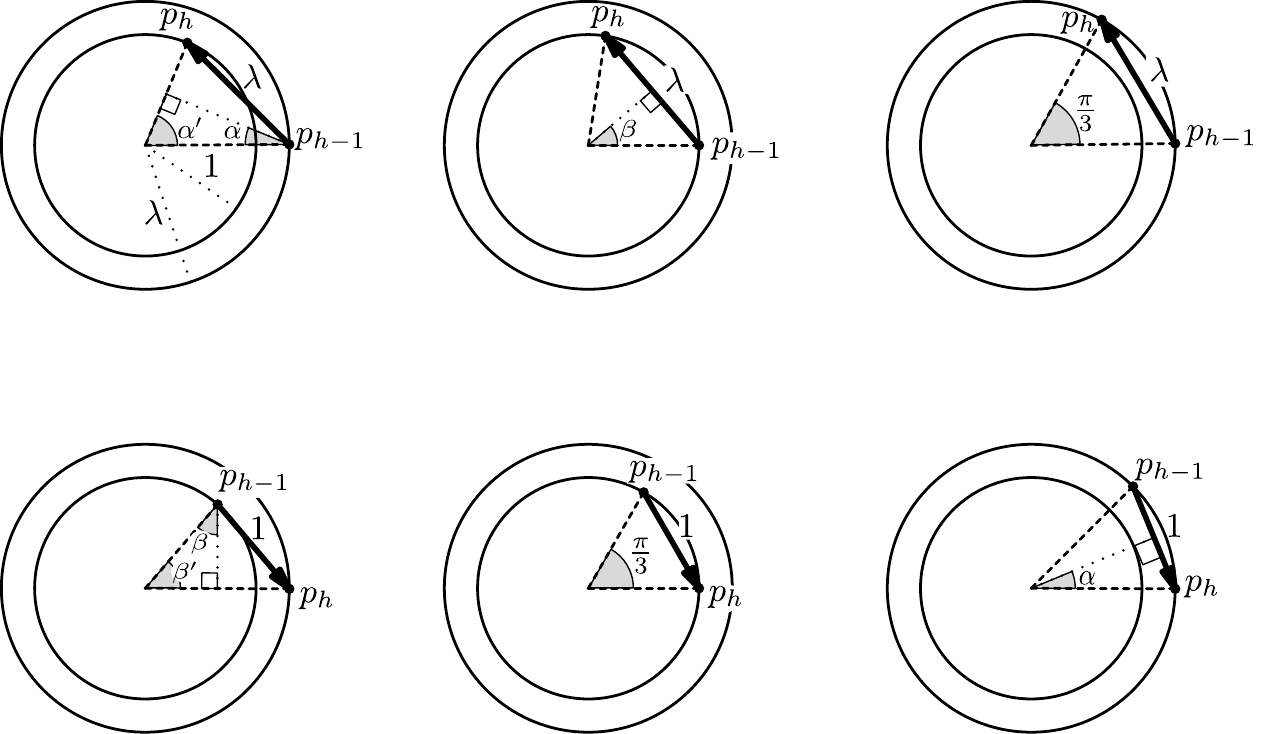}
\caption{The angles subtended at the centre by all different types of vectors having norm $1$ and $\lambda$.}
\label{fig:useful_angles}
\end{figure}

Since $1<\lambda \leq \sqrt{2}$, it follows $$\arcsin{(\frac{\sqrt{2}}{4})}\leq \alpha<\frac{\pi}{6},\frac{\pi}{6}<\beta\leq \frac{\pi}{4}.$$

Moreover, we prove the inequality $\alpha+\beta>\frac{\pi}{3}$ which will be used in what follows.
$$ \cos(\alpha+\beta)
=\cos \alpha \cos \beta - \sin \alpha \sin \beta = \sqrt{\left(1-\frac{1}{4\lambda^2} \right)\left(1-\frac{\lambda^2}{4} \right)} - \frac{1}{4}.$$

Since $\alpha+\beta$ is not larger than $\frac{5\pi}{3}$, then $\alpha+\beta > \frac{\pi}{3}$ if and only if $\sqrt{\left(1-\frac{1}{4\lambda^2} \right)\left(1-\frac{\lambda^2}{4} \right)} - \frac{1}{4}<\frac{1}{2}$. This inequality easily leads to  $4(\lambda^2-1)^2>0$ which is always satisfied. 

Finally, we denote by $\alpha'= \frac{\pi}{2}-\alpha$ and $\beta'= \frac{\pi}{2}-\beta$ the complement angles of $\alpha$ and $\beta$, respectively.

\begin{claim}
No vector $z_j$ is of type $EE^-$.
\label{claim:noEEminus}
\end{claim}
\begin{proof}
By Claim \ref{claim:positive_vectors} and since $p_0 \in \mathcal{C}_I$, to prove this claim it suffices to show that the two ordered sequences of three consecutive vectors of types $IE^-, EE^-, EI^-$ and $IE^-, EE^-,EE^-$ cannot appear in $\varphi$.

We first prove that a subsequence of type $IE^-, EE^-, EI^-$ cannot appear. 
Assume that the points $p_j$ corresponding to the subsequence $IE^-, EE^-, EI^-$ are $p_j,p_{j+1},p_{j+2}$ and $p_{j+3}$ as in Figure \ref{fig:no_EEminus_a}.
Observe that the angle subtended at the centre by $p_jp_{j+3}$ is  
\begin{equation}
\beta'+2\alpha+\beta'=2\beta'+2\alpha=2\left(\frac{\pi}{2}-\beta\right)+2\alpha=\pi-2(\beta-\alpha)<\pi
\label{eq:1}
\end{equation}
 where the last inequality holds since $\beta>\alpha$ for every $\lambda \in [1,\sqrt{2}]$.
 
\begin{figure}[h]
\center
\includegraphics[]{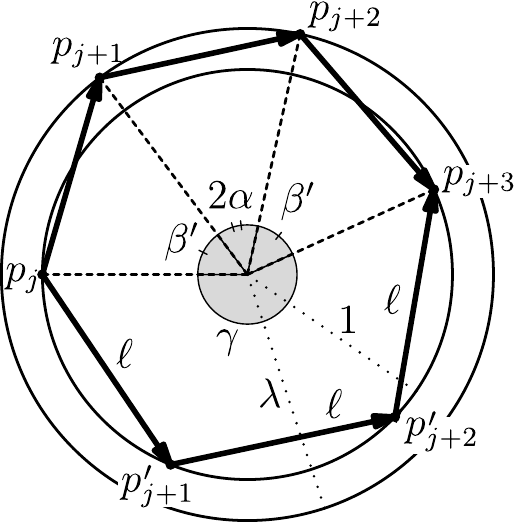}
\caption{Configuration of points $p_j,p_{j+1},p_{j+2}$ and $p_{j+3}$ corresponding to the subsequence of types $IE^-, EE^-, EI^-$.}
\label{fig:no_EEminus_a}
\end{figure}

Replace $p_{j+1}$ and $p_{j+2}$ by $p_{j+1}', p_{j+2}' \in \mathcal{C}_I$ in such a way that $p_jp_{j+1}', p_{j+1}'p_{j+2}'$ and $p_{j+2}'p_{j+3}$ are all positive vectors with $|p_jp_{j+1}'|=|p_{j+1}'p_{j+2}'|=|p_{j+2}p_{j+3}'|=\ell$ (see Figure \ref{fig:no_EEminus_a}).
Let us prove that $1<\ell <\lambda$. Indeed,  we have
$$\gamma=2\pi-(2\alpha+2\beta')=\pi-2\alpha+2\beta$$
Recalling that $\alpha+\beta>\frac{\pi}{3}$ and $\beta>\frac{\pi}{6}$, we obtain $2\alpha+4\beta>\pi$, that is $6\beta> \pi-2\alpha+2\beta=\gamma$ and so $\frac{\gamma}{3}<2\beta$. Hence, $\ell<\lambda$.
Moreover, since $\gamma >\pi$, and so $\frac{\gamma}{3}>\frac{\pi}{3}$, by (\ref{eq:1}), we have also that $\ell>1$.
Hence, the new sequence of points corresponds to an optimal flow with the same number $m_1$ of vectors of norm $\lambda$, but less than $m_2$ points on $\mathcal{C}_E$, a contradiction.

In a very similar way we prove that the subsequence of types $IE^-, EE^-,EE^-$ cannot appear in a representation of $\varphi$. Again, assume that the points $p_j$ corresponding to the subsequence $IE^-, EE^-, EE^-$ are $p_j,p_{j+1},p_{j+2}$ and $p_{j+3}$.
\begin{figure}[h]
\center
\includegraphics[]{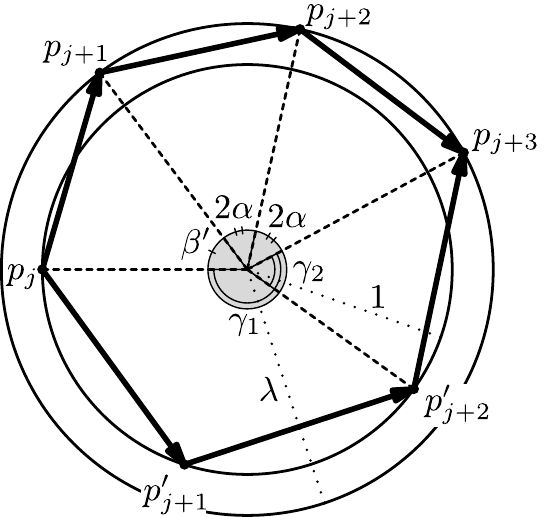}
\caption{Configuration of points $p_j,p_{j+1},p_{j+2}$ and $p_{j+3}$ corresponding to the subsequence of types $IE^-, EE^-,EE^-$.}
\label{fig:no_EEminus_b}
\end{figure}

Again we replace the points $p_{j+1}$ and $p_{j+2}$ of the sequence by two new points $p_{j+1}',p_{j+2}' \in \mathcal{C}_I$ in such a way that $|p_jp_{j+1}'|=|p_{j+1}'p_{j+2}'|=|p_{j+2}'p_{j+3}|= \ell$, as shown in Figure \ref{fig:no_EEminus_b}. We prove that $1<\ell<\lambda$. 
Denote by $\gamma_1$ the angle subtended at the centre by $p_jp_{j+2}'$ and by $\gamma_2$ the angle subtended by $p_{j+2}'p_{j+3}$.
Then, $\ell$ is such that $\beta'+4\alpha+\gamma_1+\gamma_2=2\pi$ holds. 

In order to prove $\ell<\lambda$, it suffices to show that the sum of the angles obtained with $|p_jp_{j+1}'|=|p_{j+1}'p_{j+2}'|=|p_{j+2}'p_{j+3}|= \lambda$, that is $\beta'+4\alpha+4\beta+\alpha'$, is strictly larger than $2\pi$. Since 

$$\beta'+4\alpha+4\beta+\alpha'= \frac{\pi}{2}-\beta+4\alpha+4\beta+\frac{\pi}{2}-\alpha= \pi+3(\alpha+\beta),$$

it follows that $\beta'+4\alpha+4\beta+\alpha'>2\pi$ if and only if $\alpha+\beta>\frac{\pi}{3}$, which is already proved to be satisfied.

In order to prove $\ell>1$, it suffices to show that the sum of the angles obtained with $|p_jp_{j+1}'|=|p_{j+1}'p_{j+2}'|=|p_{j+2}'p_{j+3}|=1$, that is $\beta'+4\alpha+2\frac{\pi}{3}+\beta'$, is strictly smaller than $2\pi$. Since

%

$$\beta'+4\alpha+2\frac{\pi}{3}+\beta'= \pi-2\beta+4\alpha+\frac{2}{3}\pi,$$

it follows that $\beta'+4\alpha+2\frac{\pi}{3}+\beta'<2\pi$ if and only if $2\alpha-\beta<\frac{\pi}{6}$.

Recalling that $\alpha+\beta>\frac{\pi}{3}$ and $\alpha<\frac{\pi}{6}$, we have $2\alpha-\beta=3\alpha-(\alpha+\beta) < \frac{\pi}{2} -\frac{\pi}{3} = \frac{\pi}{6}$.


Hence, the new sequence of points corresponds to an optimal flow having $m_1-1$ vectors of norm $\lambda$, a contradiction once again. This completes the proof of Claim 3.

\end{proof}

Now we permute the sequence of points $(p_0,...,p_{n-1})$ associated to $\varphi$ in a sequence denoted by $(q_0,...,q_{n-1})$, in such a way that $(w_0,...,w_{n-1})$ is also a permutation of the vectors $(z_0,...,z_{n-1})$, where $z_j=p_{j-1}p_j$ and $w_j=q_{j-1}q_j$, up to rotations of each vector around the origin. We mean that for every vector $z_h=p_{h-1}p_h$ there exists a vector $w_k=q_{k-1}q_k$ such that $q_k$ and $q_{k-1}$ are obtained by a suitable rotation of the points $p_h$ and $p_{h-1}$ of the same angle around the origin.

By definition, the sequence of types of the vectors $w_j$ is a permutation of the sequence of the types of the vectors $z_j$. Hence, the values of $m_1$ and $m_2$ do not change for this new sequence.

By previous claims such a sequence can contain only the following four types of vectors: $IE^-,EI^-,II^-$ and $II^+$.  
Moreover, if a vector of type $IE^-$ appears in the sequence, then it is necessarily followed by a vector of type $EI^-$.
We choose $(q_0,...,q_{n-1})$ in such a way that $p_0 \equiv q_0$ and all pairs $IE^-,EI^-$, if present, appear at the beginning of the sequence. They are followed by all vectors of type $II^-$, if present, and finally by all vectors of type $II^+$. The sequence of types can be described in general by the following ordered sequence.
$$(IE^-,EI^-,...,IE^-,EI^-,II^-,\dots,II^-,II^+,...,II^+)$$

Now we prove that some specific subsequences cannot appear in the sequence associated to $(q_0,\dots,q_{n-1})$.

\begin{claim}\label{claim:no_3_subsequences}
The subsequences of consecutive vectors of types
\begin{itemize}
\item[(a)] $IE^-,EI^-,IE^-$
\item[(b)] $IE^-EI^-,II^-$
\item[(c)] $II^-,II^-$
\end{itemize}  
cannot appear in the ordered sequence of types associated to $(q_0,...,q_n).$
\end{claim}
\begin{proof}
\textit{(a)} We argue similarly to what we did in the proof of Claim \ref{claim:noEEminus}. Assume that the four consecutive points corresponding to the subsequence $IE^-, EI^-, IE^-$ are $q_j,q_{j+1},q_{j+2}$ and $q_{j+3}$.
We obtain a new sequence by replacing the two points $q_{j+1},q_{j+2}$ by the points $q_{j+1}',q_{j+2}' \in \mathcal{C}_I$ such that $|q_jq_{j+1}'|=|q_{j+1}'q_{j+2}'|$ and $1<|q_{j+2}'q_{j+3}|<\lambda$ (see Figure \ref{fig:no_sequence_a}). Let us show that such a choice is admissible.

\begin{figure}[h]
\center
\includegraphics[]{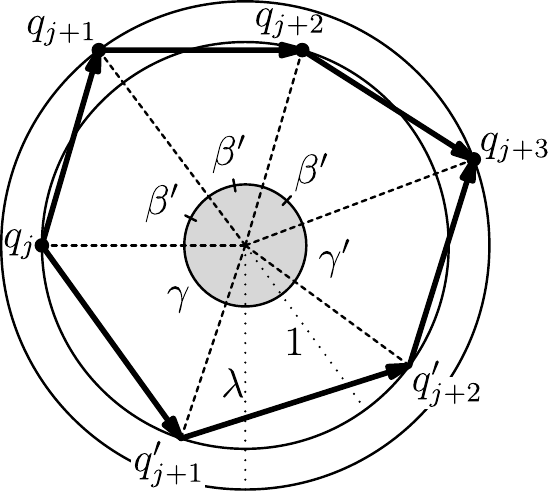}
\caption{Configuration of points $q_j,q_{j+1},q_{j+2}$ and $q_{j+3}$ corresponding to the subsequence of types $IE^-, EI^-, IE^-$.}
\label{fig:no_sequence_a}
\end{figure}

Denote by $\gamma$ and $\gamma'$ the angles subtended at the centre by $q_jq_{j+2}'$ and $q_{j+2}'q_{j+3}$, respectively. Since $|p_{j+2}'p_{j+3}|>1$, we have that $\gamma'>\beta'$.
Hence, $\gamma=2\pi-3\beta'-\gamma'<2\pi-4\beta'=4\arcsin(\frac{\lambda}{2})=4\beta$. 

Since $\beta' < \pi/3$, $2\pi - 4\beta' > \frac{2}{3}\pi$. Note that, $\gamma'= \beta' + \varepsilon$, for a certain $\varepsilon>0$. We choose $\varepsilon$ sufficiently small in such a way that $\gamma = 2\pi - 3\beta' -\gamma' = 2\pi - 4\beta' - \varepsilon > \frac{2}{3}\pi$. So $|q_jq_{j+1}'|>1$. Moreover, $\gamma = 2\pi - 3\beta' - \gamma' < 2\pi - 4\beta' = 4\beta$ implies that $|q_jq_{j+1}'|< \lambda$.

The new sequence of points has $m_1$ vectors of norm $\lambda$, but less than $m_2$ points which belongs to $\mathcal{C}_E$, a contradiction.

\textit{(b)} Assume that the four consecutive points corresponding to the subsequence $IE^-, EI^-, II^-$ are $q_j,q_{j+1},q_{j+2}$ and $q_{j+3}$. Denote by $\gamma$ the explement angle of the angle subtended at the centre by $q_jq_{j+3}$.  We obtain a new sequence by replacing the two points $q_{j+1},q_{j+2}$ by the points $q_{j+1}',q_{j+2}' \in \mathcal{C}_I$ such that $|q_jq_{j+1}'|=|q_{j+1}'q_{j+2}'|=|q_{j+2}'q_{j+3}|$  (see Figure \ref{fig:no_sequence_b}).
Since $\beta'<\frac{\pi}{3}$, we have $2\beta'+\frac{\pi}{3}<\pi$, then $|q_jq_{j+1}'|>1$. 

Moreover, since $\lambda > 1$, $\gamma 
         = 2\pi - (\frac{\pi}{3} + 2\beta') = \frac{2}{3}\pi + 2\beta < 6\beta$. Hence, $|q_jq_{j+1}'| < \lambda$.

The new sequence of points corresponds to an optimal flow having $m_1$ vectors of maximum norm $\lambda$, but less than $m_2$ points $q_i$ on $\mathcal{C}_E$, a contradiction.

\begin{figure}[h]
\center
\includegraphics[]{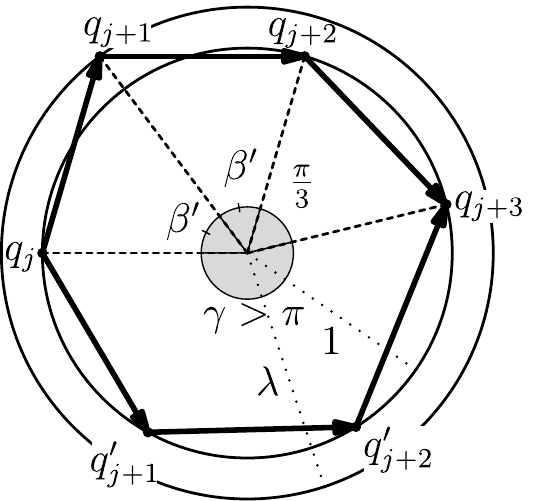}
\caption{Configuration of points $q_j,q_{j+1},q_{j+2}$ and $q_{j+3}$ corresponding to the subsequence of types $IE^-, EI^-, II^-$.}
\label{fig:no_sequence_b}
\end{figure}

\textit{(c)} Let $w_{j}$ and $w_{j+1}$ be the last two vectors of type $II^-$ in the sequence, that is $w_{j+2}$ is of type $II^+$. 
We obtain a new sequence by replacing the two points $q_{j+1},q_{j+2} \in \mathcal{C}_I$ by the points $q_{j+1}',q_{j+2}'$ such that $q_{j+1}'=(1+\varepsilon)q_{j+1}$ and $q_{j+2}'=q_{j+2}e^{i\theta}$, where $\varepsilon>0$ and $\theta>0$ are chosen sufficiently small and in such a way that $|q_{j+1}'q_{j+2}'|=1$ and $1<|q_{j+2}'q_{j+3}|<\lambda$.
The new sequence of points corresponds to an optimal flow having less than $m_1$ vectors of maximum norm $\lambda$, a contradiction.
This completes the proof of Claim 4.
\end{proof}

By previous claims there exists an optimal flow of $W_n$ such that the types of its vectors respect one of the following three sequences.
\begin{itemize}
\item[(i)] $(IE^-, EI^-, II^+, ..., II^+)$
\item[(ii)] $(II^-, II^+,...,II^+)$
\item[(iii)] $(II^+,..., II^+)$ 
\end{itemize}

For each given $n$ odd, the value of $\lambda$ is completely determined once we know which of the three sequences we are considering. The exact values of $\lambda$ in each of these cases are summarized in Table \ref{Table_nine_cases}. We give here an example of direct computation of the values in the last column of the table. The remaining values are computed similarly. In this particular sequence, every vector has norm $\lambda$, while all points $p_j$ belong to $\mathcal{C}_I$. Hence, the angle subtended at the centre by each $z_j$ on $\mathcal{C}_I$ is exactly $2\beta=2\arcsin (\frac{\lambda}{2})$. Hence, for some integer $k>0$, we have $2n\arcsin \frac{\lambda}{2}=2k\pi$, that is

$$\lambda=2\sin\left( \frac{k}{n}\pi \right) .$$

We are looking for the minimum possible $\lambda > 1$. Then, $k$ is chosen as the smallest integer such that $\frac{k}{n} \pi > \frac{\pi}{6}$, that is $k = \lceil \frac{n}{6}\rceil$. 
Set $n=6t+h$, for $h=1,3,5$. We obtain $\lambda= 2\sin\left(\frac{\pi(t+1)}{6t+h}\right)=
2\sin\left(\frac{\pi}{6}\cdot\frac{6t+6}{6t+h}\right)=2\sin\left(\frac{\pi}{6}\cdot\frac{n+(6-h)}{n}\right).$

Finally, comparing for each congruence of $n$ the three possible values for $\lambda$, it turns out that the minimum $\lambda$ is obtained with configuration (ii) if $n \equiv 1\mod 6$, configuration (i) if $n \equiv 3\mod 6$ and configuration (iii) if $n \equiv 5\mod 6 $. The statement follows.


	\begin{table}[h]
\centering
\begin{tabular}{cccc}
  &$IE^-,EI^-,II^+,...,II^+$ & $II^-,II^+,...,II^+$ &  $II^+,...,II^+$ \\
\cmidrule[1pt]{1-4}
$n \equiv 1\mod 6$ & $2\sin\bigl(\frac{\pi}{6}\cdot\frac{n+2}{n-1}\bigr)$  & \boldmath{$2\sin \bigl(\frac{\pi}{6}\cdot \frac{n}{n-1}\bigr)$} & $2\sin\bigl(\frac{\pi}{6}\cdot\frac{n+5}{n}\bigr)$  \\
\cmidrule[0.5pt]{1-4}
$n \equiv 3\mod 6$& \boldmath{$2\sin\bigl( \frac{\pi}{6}\cdot\frac{n}{n-1}\bigr)$}  & $2\sin \bigl(\frac{\pi}{6}\cdot \frac{n+4}{n-1}\bigr)$  & $2\sin\bigl(\frac{\pi}{6}\cdot\frac{n+3}{n}\bigr)$  
\\
\cmidrule[0.5pt]{1-4}
$n \equiv 5\mod 6$ & $2\sin\bigl(\frac{\pi}{6} \cdot \frac{n+4}{n-1}\bigr)$  & $2\sin \bigl(\frac{\pi}{6} \cdot \frac{n+2}{n-1}\bigr)$ & \boldmath{$2\sin\bigl(\frac{\pi}{6}\cdot\frac{n+1}{n}\bigr)$}  
\\
\cmidrule[1pt]{1-4}
\end{tabular}
\caption{Exact values for $\lambda$ in configurations $(i),(ii)$ and $(iii)$, according to the congruence of $n$ modulo $6$. In bold the minimum value for each of the three cases.}\label{Table_nine_cases}
\end{table}

\end{itemize}
\end{proof}

\section{A general lower bound for $\phi_{\mathbb{C}}$}
The value $\phi_\mathbb{C}(W_n)$, for each odd $n$, gives a general non-trivial lower bound for $\phi_\mathbb{C}(G)$, where $G$ is a bridgeless cubic graph, in terms of its odd-girth (the length of a shortest odd cycle of $G$). This is a straightforward consequence of the following standard observation. If $C$ is an odd cycle of minimum length in $G$, then $C$ is chordless. Contract all vertices of $G$ not belonging to $C$ to a unique vertex, thus obtaining a wheel graph whose complex flow number cannot be more than the complex flow number of $G$. Then, by Theorem \ref{thm:wheels} we deduce the following general result.
 
\begin{theorem}\label{cor:oddgirth}
Let $G$ be a non-bipartite cubic graph and let $g$ be its odd-girth. Then,
$$\phi_{\mathbb{C}}(G) \geq \begin{cases}
	1 + 2\sin(\frac{\pi}{6}\cdot\frac{g}{g-1})& \text{ if } g \equiv 1,3\mod 6, \\
	1 + 2\sin(\frac{\pi}{6}\cdot\frac{g+1}{g})& \text{ if } g\equiv 5 \mod 6. \\                                                                                                                               \end{cases}
	$$
\end{theorem}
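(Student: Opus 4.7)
The plan is to reduce the theorem directly to Theorem \ref{thm:wheels} by exhibiting the wheel $W_g$ as a minor of $G$ in a flow-preserving way, following the strategy outlined in the paragraph just before the statement.

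First I would pick an odd cycle $C\subseteq G$ of minimum length $g$, and observe that $C$ must be chordless. Indeed, any chord of $C$ decomposes it into two shorter cycles whose lengths sum to $g+2$; since $g$ is odd, exactly one of those cycles is odd, contradicting the minimality of $g$. Because $G$ is cubic and $C$ is chordless, each vertex $v_i$ of $C$ has exactly one neighbor outside $V(C)$, so precisely one edge leaves each $v_i$ towards $V(G)\setminus V(C)$.

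Next I would form the minor $H$ by contracting every edge of $G$ not in $E(C)$ and discarding loops. All vertices of $V(G)\setminus V(C)$ collapse to a single vertex $u$, and the $g$ spoke edges inherited from the previous observation join $u$ to each $v_i$; no parallel spokes arise because each $v_i$ had a single neighbor outside $C$. Thus $H \cong W_g$. The key flow-theoretic fact is that contraction never raises the complex flow number: if $\varphi$ is a complex nowhere-zero $r$-flow on $G$ and $S\subseteq E(G)$ is any set, then the restriction of $\varphi$ to $E(G)\setminus S$ is a complex nowhere-zero $r$-flow on $G/S$, because summing Kirchhoff's condition over all vertices merged into a given super-vertex makes the contracted contributions cancel in pairs, while the norm constraints on the surviving edges are untouched. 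Loops that may appear are automatically balanced and can be deleted. Therefore $\phi_{\mathbb{C}}(W_g)\le\phi_{\mathbb{C}}(G)$.

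Finally, I would invoke Theorem \ref{thm:wheels} with $n=g$ odd: since $G$ is non-bipartite the odd-girth $g$ is defined and odd, and the corresponding value of $\phi_{\mathbb{C}}(W_g)$ is exactly one of the two formulas appearing in the statement, depending on whether $g\equiv 1,3\pmod{6}$ or $g\equiv 5\pmod{6}$. The whole argument is essentially structural, so there is no genuine obstacle beyond verifying that the minor really is the clean wheel $W_g$; this is guaranteed by the chordlessness of $C$ together with the cubicity of $G$, which together prevent any doubled spoke or surviving chord from spoiling the isomorphism with $W_g$.
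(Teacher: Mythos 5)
Your proposal is correct and follows exactly the argument the paper gives: take a shortest odd cycle, note it is chordless, contract the rest of the graph to a single hub to obtain $W_g$, use the fact that contraction does not increase $\phi_{\mathbb{C}}$, and apply Theorem \ref{thm:wheels}. The extra details you supply (why the cycle is chordless, why cubicity forces exactly one spoke per rim vertex, and why contraction preserves the Kirchhoff condition) are all sound and merely flesh out the paper's brief sketch.
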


Let us remark that lower bounds in Theorem \ref{cor:oddgirth} are tight due to the prism graph $P_n$ of order $2n$. Indeed, it is easy to see that each complex $\phi_{\mathbb{C}}(W_n)$-flow on $W_n$ can be extended to a complex $\phi_{\mathbb{C}}(W_n)$-flow on $P_n$ by a symmetry argument. Hence, the following holds.

\begin{corollary}\label{cor:prisms}
Let $P_n$ be the prism graph of order $2n$, $n \geq 3$. Then,
$$\phi_{\mathbb{C}}(P_n) = \begin{cases}
    2 & \text{ if } n \text { even,}\\ 

	1 + 2\sin(\frac{\pi}{6}\cdot\frac{n}{n-1})& \text{ if } n \equiv 1,3\mod 6, \\
	1 + 2\sin(\frac{\pi}{6}\cdot\frac{n+1}{n})& \text{ if } n\equiv 5 \mod 6. \\                                                                                                                               \end{cases}
	$$
\end{corollary}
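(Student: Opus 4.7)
The plan is to split on the parity of $n$, combining the trivial bound $\phi_{\mathbb{C}} \geq 2$ and Theorem \ref{cor:oddgirth} for the lower bound with explicit flow constructions for the upper bound.

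\textbf{Even $n$.} I claim that $P_n$ is bipartite: coloring $u_j$ by $j \bmod 2$ on the top $n$-cycle and $v_j$ by $(j+1) \bmod 2$ on the bottom $n$-cycle, both cycle edges (which shift $j$ by $1$) and matching edges $u_jv_j$ (which also switch color by construction) connect distinct classes. A bipartite cubic graph is $3$-edge-colorable by K\"onig's theorem and therefore admits a $(3,1)$-NZF; Proposition 1 of \cite{Tho} then yields $\phi_{\mathbb{C}}(P_n) \leq 2$, which combined with the universal bound $\phi_{\mathbb{C}}(P_n) \geq 2$ settles the case.

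\textbf{Odd $n$, lower bound.} I would first show that the odd-girth of $P_n$ equals $n$. The top $n$-cycle alone gives odd-girth at most $n$. For the reverse inequality, view $P_n$ as the Cartesian product $C_n \square K_2$ and label each vertex by its level in $K_2$. Matching edges switch level while cycle edges preserve it, so every cycle of $P_n$ uses an even number of matching edges. Contracting the matching edges sends an odd cycle of $P_n$ to a closed walk in $C_n$ whose length equals the number of cycle edges used, which is therefore odd; a closed walk in $C_n$ of odd length must have length at least $n$ (its signed step sum is a nonzero multiple of $n$, and the sum of $c$ values in $\{\pm 1\}$ has absolute value at most $c$). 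Hence the odd-girth of $P_n$ is $n$, and applying Theorem \ref{cor:oddgirth} with $g=n$ yields precisely the stated lower bounds.

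\textbf{Odd $n$, upper bound.} Take an optimal complex flow $\varphi$ on $W_n$ encoded by the points $(p_0,\dots,p_{n-1})$ with vectors $z_j = p_j - p_{j-1}$, and write $\lambda = \phi_{\mathbb{C}}(W_n) - 1$. Label the two $n$-cycles of $P_n$ as $u_0,\dots,u_{n-1}$ and $v_0,\dots,v_{n-1}$, with matching edges $u_jv_j$. Orient the cycle edges forward (from $u_{j-1}$ to $u_j$, and from $v_{j-1}$ to $v_j$) and the matching edges from $v_j$ to $u_j$, and set
\[
\psi(u_{j-1}u_j) = p_{j-1}, \qquad \psi(v_{j-1}v_j) = -p_{j-1}, \qquad \psi(v_ju_j) = z_j.
\]
At $u_j$ conservation reads $p_{j-1} + z_j = p_j$, and at $v_j$ it reads $-p_{j-1} = -p_j + z_j$; both reduce to the identity $z_j = p_j - p_{j-1}$ that already holds for $\varphi$. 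Every value $\psi$ takes has the same modulus as one of $p_{j-1}, z_j$, hence lies in $[1,\lambda]$, so $\psi$ is a complex $(\lambda+1)$-flow on $P_n$. Therefore $\phi_{\mathbb{C}}(P_n) \leq \phi_{\mathbb{C}}(W_n)$, which together with the lower bound from Theorem \ref{cor:oddgirth} and Theorem \ref{thm:wheels} gives the claimed equalities.

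\textbf{Main obstacle.} The only non-routine step is the odd-girth calculation, which relies on the double-cover structure of $P_n$; the flow extension is then a direct symmetry argument exploiting that $W_n$ arises from $P_n$ by contracting one of the two cycles to a single vertex.
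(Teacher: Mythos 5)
Your proposal follows the same route as the paper, which only sketches this corollary in one line (lower bound from Theorem \ref{cor:oddgirth} applied with odd-girth $n$, upper bound by extending an optimal flow of $W_n$ to $P_n$ by symmetry); your odd-girth computation and the explicit assignment $\psi(u_{j-1}u_j)=p_{j-1}$, $\psi(v_{j-1}v_j)=-p_{j-1}$, $\psi(v_ju_j)=z_j$ are correct and supply exactly the details the paper leaves implicit. One step in your even case is wrong as stated, though the conclusion survives: $3$-edge-colorability of a cubic graph via K\"onig only yields a nowhere-zero $4$-flow, not a $(3,1)$-NZF ($K_4$ is $3$-edge-colorable but has flow number $4$), so the chain ``bipartite $\Rightarrow$ $3$-edge-colorable $\Rightarrow$ $(3,1)$-NZF'' does not hold. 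You should instead invoke Tutte's theorem that a cubic graph admits a nowhere-zero $3$-flow if and only if it is bipartite (or simply note that the symmetric extension of the $(2,2)$-NZF of $W_n$, $n$ even, already gives a $(2,2)$-NZF of $P_n$), and then apply Proposition 1 of \cite{Tho} as you do.
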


\section{Acknowledgments}

Davide Mattiolo is supported by a Postdoctoral Fellowship of the Research Foundation Flanders (FWO), project number 1268323N.

\end{document}